\documentclass[a4paper,11pt]{article}
\usepackage[T1]{fontenc}
\usepackage[latin1]{inputenc}
\usepackage{amssymb}
\usepackage[english]{babel}
\usepackage{amsthm}
\usepackage{amsmath}
\usepackage{amsfonts}
\usepackage{enumerate}

\usepackage{array,lmodern,graphicx}

\def\R{\mathbb{R}}

\def\fra{\mathfrak{a}}

\newtheorem{lem}{Lemma}[section]

\newtheorem{prop}[lem]{Proposition}
\newtheorem{teo}[lem]{Theorem}

\def\G{\mathcal{G}}
\def\H{\mathcal{H}}
\def\eps{\varepsilon}

\begin{document}

\title{Littlewood-Paley-Stein functions for Schr\"odinger operators}
\author{El Maati Ouhabaz \footnote{This note is published in "Frontiers in Sciences and Engineerings", Edited by the Hassan II Academy of Science and Technology of Morocco, Vol. 6, no 1 (2016) 99-109.} \thanks{\noindent Univ. Bordeaux, Institut de Math\'ematiques (IMB). CNRS UMR 5251. 351,  
Cours de la Lib\'eration 33405 Talence, France.
 Elmaati.Ouhabaz@math.u-bordeaux.fr.}}
  
\date{}

\maketitle

\vspace{1cm} 
\begin{center}{{\it Dedicated to the memory of Abdelghani Bellouquid\\
(2/2/1966--8/31/2015)}}
\end{center}
\vspace{1cm}

\begin{abstract}
We study the boundedness on $L^p(\R^d)$ of the vertical Littlewood-Paley-Stein functions for Schr\"odinger operators $-\Delta + V$ with non-negative potentials
$V$. These functions are proved to be bounded on $L^p$ for all $p \in (1,2)$. The situation for $p > 2$ is different. We prove  for a class of potentials that the boundedness on $L^p$,  for some $p > d$, holds {\it if and only if} $V = 0$.
\end{abstract}
\vspace{.5cm}

\noindent Mathematics Subject Classification:  42B25, 47F05\\
Keywords: Schr\"odinger operators, Littlewood-Paley-Stein functions, Functional calculus.
\vspace{.5cm}

\section{Introduction}\label{sec1}

Let $L := -\Delta + V$ be a Schr\"odinger operator with a non-negative potential $V$.  It  is the  self-adjoint operator  associated with the  form
$$\fra(u,v) := \int_{\R^d} \nabla u. \nabla v dx + \int_{\R^d} V u v dx$$
with domain
$$ D(\fra) = \{ u \in W^{1,2}(\R^d), \int_{\R^d} V |u|^2 dx < \infty \}.$$
We denote by $(e^{-tL})_{t\ge0}$ the semigroup generated by (minus) $L$ on $L^2(\R^d)$. Since $V$ is nonnegative, it follows from the Trotter product formula that 
\begin{equation}\label{01}
0 \le e^{-tL} f \le e^{t\Delta}f 
\end{equation}
for all $t \ge 0$ and  $0 \le f \in L^2(\R^d)$ (all the inequalities are in the a.e. sense). It follows immediately from \eqref{01} that the semigroup $(e^{-tL})_{t\ge0}$  is sub-Markovian and hence extends to a contraction $C_0$-semigroup on $L^p(\R^d)$ for all 
$p \in [1, \infty)$.  We shall also denote by $(e^{-tL})_{t\ge0}$  the corresponding semigroup on $L^p(\R^d)$. 

The domination property \eqref{01} implies in particular that the corresponding heat kernel of $L$ is pointwise bounded by the Gaussian heat kernel.   As a consequence, $L$ has a bounded holomorphic functional calculus on $L^p(\R^d)$ and even H\"ormander type functional calculus (see \cite{DOS}). This implies the boundedness on $L^p(\R^d)$ for all $p \in (1, \infty)$ of the horizontal  Littlewood-Paley-Stein functions:
$$ g_L(f)(x) := \left[ \int_0^\infty t | \sqrt{L} e^{-t\sqrt{L}} f(x) |^2 dt \right]^{1/2}$$
and 
$$ h_L(f)(x) := \left[ \int_0^\infty t | L e^{-t L} f(x) |^2 dt \right]^{1/2}.$$
Indeed, these functions are of the form  (up to a constant)
$$  S_L f(x) = \left[ \int_0^\infty |  \psi(tL) f(x) |^2 \frac{dt}{t} \right]^{1/2}$$
with $\psi(z) = \sqrt{z} e^{- \sqrt{z}}$ for $g_L$ and $\psi(z) = z e^{-z}$ for $h_L$. The boundedness of the holomorphic functional calculus 
implies the boundedness of $S_L$ (see \cite{CDMY}). Thus,  $g_L$ and $h_L$ are bounded on $L^p(\R^d)$ for all $p \in (1, \infty)$ and this holds for every nonnegative potential $V \in L_{loc}^1(\R^d)$. 

Now we define the so-called  vertical Littlewood-Paley-Stein functions
$$ \G_L(f)(x) := \left( \int_0^\infty t | \nabla e^{-t\sqrt{L}} f(x) |^2 +  t | \sqrt{V}  e^{-t\sqrt{L}} f(x) |^2 \ dt \right)^{1/2} $$
and 
$$\H_L(f)(x) := \left( \int_0^\infty | \nabla e^{-t L} f(x) |^2 +   | \sqrt{V}  e^{-t L} f(x) |^2 \ dt \right)^{1/2}. $$

Note that usually, these two functions are defined without the additional terms $t | \sqrt{V}  e^{-t\sqrt{L}} f(x) |^2$ and $ | \sqrt{V}  e^{-t L} f(x) |^2$. 

The functions $\G_L$ and $\H_L$ are very different from $g_L$ and $h_L$ as we shall see in the last section of this paper. 
If $V = 0$ and hence $L = -\Delta$ it is a very well known fact that $\G_L$ and $\H_L$ are bounded on $L^p(\R^d)$ for all 
$p \in (1, \infty)$. The Littlewood-Paley-Stein  functions are crucial in the study 
of non-tangential limits  of Fatou type and the boundedness of Riesz
transforms. We refer to  \cite{Stein1}-\cite{Stein2}.
 For Schr\"odinger operators, boundedness results on $L^p(\R^d)$ are proved in  \cite{Shigekawa} for 
potentials $V$ which satisfy  $\frac{| \nabla V |}{V} + \frac{\Delta V}{V} \in L^\infty(\R^d)$. This is a rather restrictive condition. 
For elliptic operators in divergence form (without a potential)  boundedness results on $L^p(\R^d)$ for certain values of $p$ are proved in \cite{Auscher}. For the setting of Riemannian manifolds we refer to \cite{Coulhon-Studia} and \cite{CoulhonCPAM}. Again the last two papers do not deal with Schr\"odinger operators.  

In this note we prove that $\G_L$ and $\H_L$ are bounded on $L^p(\R^d)$ for all $p \in (1, 2]$ for every nonnegative potential $V \in L_{loc}^1(\R^d)$.  That is 
$$\int_{\R^d} \left( \int_0^\infty t | \nabla e^{-t\sqrt{L}} f(x) |^2 +  t | \sqrt{V}  e^{-t\sqrt{L}} f(x) |^2 \ dt \right)^{p/2} dx \le C \int_{\R^d} |f(x)|^p dx$$
and similarly,
$$\int_{\R^d} \left( \int_0^\infty  | \nabla e^{-t L} f(x) |^2 +  | \sqrt{V}  e^{-t L} f(x) |^2 \ dt \right)^{p/2} dx \le C \int_{\R^d} |f(x)|^p dx$$
for all $f \in L^p(\R^d)$. \\
Our  arguments of the proof are borrowed from the paper \cite{Coulhon-Studia} which we adapt to our case in order to take into account the terms with $\sqrt{V}$ in the definitions of $\G_L$ and $\H_L$. Second we consider the case $p > 2$ and $d \ge 3$.  
For a  wide class of potentials, we prove that if $\G_L$ (or $\H_L$) is bounded on $L^p(\R^d)$ for some 
$p > d$ then $V= 0$. Here we use some ideas from \cite{Magniez} which  deals with the Riesz transform on Riemannian manifolds.  In this latter result we could replace $\G_L$ by $\left( \int_0^\infty t | \nabla e^{-t\sqrt{L}} f(x) |^2 dt \right)^{1/2}$ and the conclusion remains valid. 

Many questions of harmonic analysis  have been studied for  Schr\"odinger operators. For example, spectral multipliers and Bochner Riesz means \cite{DOS} and \cite{Ouh05} and  Riesz transforms \cite{Ouh05}, \cite{Assaad}, \cite{Shen} and \cite{AusBA}. 
However little seems to be available in the literature concerning  the  associated Littlewood-Paley-Stein functions $\G_L$ and $\H_L$. 
Another reason which motivates the present paper is to understand the Littlewood-Paley-Stein functions for the Hodge de-Rham Laplacian on differential forms. Indeed, Bochner's formula allows to write the Hodge de-Rham Laplacian on $1$-differential forms as a Schr\"odinger operator (with a vector-valued potential). Hence, understanding the Littlewood-Paley-Stein functions for Schr\"odinger operators $L$  could be  a first step in order to consider the Hodge de-Rham Laplacian. Note however that unlike the present case, if the manifold has a negative Ricci curvature part, then the semigroup of the Hodge de-Rham Laplacian does not necessarily act on all $L^p$ spaces. Hence the arguments presented in this paper have to be changed considerably. We shall address  this problem in a forthcoming  paper. 

\section{Boundedness on $L^p, \ 1 < p \le 2$}\label{sec2}
Recall that $L = -\Delta + V$ on $L^2(\R^d)$. We have 
\begin{teo}\label{thm1} 
For every  $ 0 \le V \in L_{loc}^1(\R^d)$,  $\G_L$ and $\H_L$ are bounded on $L^p(\R^d)$ for all $p \in (1, 2]$. 
\end{teo}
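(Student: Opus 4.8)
The plan is to adapt the Stein-type argument of Coulhon--Duong--Li \cite{Coulhon-Studia}, carrying along the extra terms involving $\sqrt V$. Two ingredients will be used repeatedly. First, the domination \eqref{01} gives $0\le e^{-tL}f\le e^{t\D}f$ for $0\le f$, and since $e^{t\D}$ is convolution with a radially decreasing kernel of mass one, $e^{t\D}f\le c_d\,Mf$ pointwise, where $M$ is the Hardy--Littlewood maximal operator; hence $0\le e^{-tL}f(x)\le c_d\,Mf(x)$ for all $t>0$. Subordinating, $e^{-t\sqrt L}=\int_0^\i\eta_t(s)\,e^{-sL}\,ds$ with $\eta_t\ge0$, $\int_0^\i\eta_t=1$, the same bound applied to the Poisson kernel gives $0\le e^{-t\sqrt L}f(x)\le c_d\,Mf(x)$. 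Second, $\|Mf\|_p\le c_{p,d}\|f\|_p$ for every $p>1$. Finally, $\G_L$ and $\H_L$ are subadditive, being $L^2((0,\i);dt)$-norms of a linear function of $f$; splitting into real/imaginary and positive/negative parts, it suffices to prove the two inequalities for $0\le f$, and by density we may assume in addition that $f$ is bounded with compact support, provided the constant does not depend on $f$. Fix such an $f$ and set $u(x,t)=e^{-tL}f(x)\ge0$, $v(x,t)=e^{-t\sqrt L}f(x)\ge0$.

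For $\H_L$, put $\phi(t)=\int_{\R^d}u(x,t)^p\,dx$. Using $\partial_t u=-Lu=\D u-Vu$ and integrating by parts in $x$,
\[
-\phi'(t)=p\int_{\R^d}u^{p-1}\big(-\D u+Vu\big)\,dx=p(p-1)\int_{\R^d}u^{p-2}|\nabla u|^2\,dx+p\int_{\R^d}Vu^p\,dx\ \ge\ 0 .
\]
Since $\phi\ge0$, integrating over $t\in(0,\i)$ gives $\int_0^\i(-\phi'(t))\,dt\le\phi(0)=\|f\|_p^p$, and using $p-1\le1$ and Tonelli we obtain
\[
\int_{\R^d}\int_0^\i u^{p-2}\big(|\nabla u|^2+Vu^2\big)\,dt\,dx\ \le\ \frac{1}{p(p-1)}\,\|f\|_p^p .
\]
Because $0\le u\le c_d\,Mf$ and $p-2\le0$, we have $u^{p-2}\ge(c_d\,Mf)^{p-2}$, so the left-hand side dominates $c_d^{\,p-2}\int_{\R^d}(Mf)^{p-2}\,\H_L(f)^2\,dx$, whence $\int_{\R^d}(Mf)^{p-2}\H_L(f)^2\,dx\le C_p\|f\|_p^p$.

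Now for $1<p<2$, writing $\H_L(f)^p=\big(\H_L(f)^2(Mf)^{p-2}\big)^{p/2}(Mf)^{p(2-p)/2}$ and applying H\"older with exponents $\tfrac2p$ and $\tfrac{2}{2-p}$,
\[
\int_{\R^d}\H_L(f)^p\,dx\le\Big(\int_{\R^d}\H_L(f)^2(Mf)^{p-2}\,dx\Big)^{p/2}\Big(\int_{\R^d}(Mf)^p\,dx\Big)^{(2-p)/2}\le C\big(\|f\|_p^p\big)^{p/2}\big(\|f\|_p^p\big)^{(2-p)/2}=C\|f\|_p^p ,
\]
using the maximal inequality for the last factor; thus $\|\H_L f\|_p\le C_p\|f\|_p$. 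The case $p=2$ is the energy identity $\|\H_L f\|_2^2=\int_0^\i\fra(e^{-tL}f,e^{-tL}f)\,dt\le\tfrac12\|f\|_2^2$. For $\G_L$ one argues the same way but differentiates twice: with $\psi(t)=\int_{\R^d}v(x,t)^p\,dx$ and $\partial_t^2 v=Lv$ one gets $\psi''(t)=p(p-1)\int_{\R^d}v^{p-2}\big(|\partial_t v|^2+|\nabla v|^2\big)\,dx+p\int_{\R^d}Vv^p\,dx\ge0$; integrating by parts twice, $\int_0^T t\,\psi''(t)\,dt=T\psi'(T)-\psi(T)+\psi(0)\le\psi(0)=\|f\|_p^p$ since $\psi\ge0$ is nonincreasing, and letting $T\to\i$ and discarding $|\partial_t v|^2\ge0$ yields $\int_{\R^d}\int_0^\i t\,v^{p-2}\big(|\nabla v|^2+Vv^2\big)\,dt\,dx\le\frac{1}{p(p-1)}\|f\|_p^p$. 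Exactly as above this bounds $\int_{\R^d}(Mf)^{p-2}\G_L(f)^2\,dx$ by $C_p\|f\|_p^p$, and H\"older plus the maximal inequality give $\|\G_L f\|_p\le C_p\|f\|_p$ (with $p=2$ again the energy identity).

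The one delicate point — and the place where the argument of \cite{Coulhon-Studia} genuinely has to be adapted — is to justify the identities for $\phi'$ and $\psi''$ when $V$ is merely in $L^1_{loc}(\R^d)$: differentiating $t\mapsto\|e^{-tL}f\|_p^p$ and integrating $u^{p-1}$ against $-Lu$ by parts is done by a routine regularization (replace $s\mapsto s^p$ by a $C^2$ function with bounded derivatives, truncate $u$, use that $u(\cdot,t)\in W^{1,2}(\R^d)\cap L^p(\R^d)$ with $\sqrt V\,u(\cdot,t)\in L^2(\R^d)$ and $\partial_t u=-Lu$ in $L^p$, then remove the regularization). The new feature compared with \cite{Coulhon-Studia} is precisely the appearance of the potential terms $\int Vu^p$ and $\int Vv^p$, which are exactly the quantities needed to handle the $\sqrt V$-contributions to $\H_L$ and $\G_L$ and which come for free from the same integration by parts.
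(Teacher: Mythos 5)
Your argument is correct, and its engine is the same as the paper's: the Stein--Coulhon--Duong--Li identity showing that $(\frac{\partial}{\partial t}+L)$ applied to the $p$-th power of the (heat or Poisson) orbit produces exactly $u^{p-2}(|\nabla u|^2+Vu^2)$ with a favorable sign, the potential term coming for free from the integration by parts. Where you genuinely diverge is in the two auxiliary ingredients. First, the paper controls the weight $u^{2-p}$ pointwise by $\sup_{t>0}e^{-tL}f$ and invokes Stein's maximal ergodic theorem for sub-Markovian semigroups, whereas you use the Gaussian domination $0\le e^{-tL}f\le e^{t\Delta}f\le c_d Mf$ and the Hardy--Littlewood maximal inequality, and you integrate in $x$ first so as to land on the weighted bound $\int (Mf)^{p-2}\H_L(f)^2\,dx\lesssim \|f\|_p^p$ before applying H\"older; this is more elementary and self-contained in the Euclidean setting, though it leans on the convolution structure of $e^{t\Delta}$, while the paper's route transfers verbatim to situations where only sub-Markovianity is available. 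Second, the paper disposes of $\G_L$ at the outset via the subordination comparison $\G_L\le C\,\H_L$ and then only treats $\H_L$, whereas you run the argument for $\G_L$ directly on the Poisson semigroup, using $\partial_t^2 v=Lv$, the weight $t$, and two integrations by parts in $t$ (with $T\psi'(T)\le 0$ and $\psi\ge 0$ nonincreasing); this is slightly longer but gives the Poisson-side estimate without passing through $\H_L$. Your computations check out (including the absorption of the $V$-terms using $p(p-1)\le p$ and the separate $p=2$ energy identity), and the technical caveats you flag --- strict positivity of the orbit (irreducibility) so that $u^{p-2}$ makes sense, and the regularization needed to justify $\phi'$, $\psi''$ and the integration by parts for $V\in L^1_{loc}$ --- are present at exactly the same level of informality in the paper's own proof, so there is no gap relative to it.
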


\begin{proof}
By the subordination formula
$$ e^{-t\sqrt{L}} = \frac{1}{\sqrt{\pi}} \int_0^\infty e^{- \frac{t^2}{4s} L} e^{-s} s^{-1/2} ds$$
it follows easily that there exists a positive constant $C$ such that
\begin{equation}\label{11}
 \G_L(f)(x)  \le C \H_L f(x)
 \end{equation}
 for all $f \in L^1(\R^d) \cap L^\infty(\R^d)$ and a.e. $x \in \R^d$. See e.g. \cite{Coulhon-Studia}. Therefore it is enough to prove boundedness of $\H_L$ on $L^p(\R^d)$. \\
 In order to do so, we may consider only nonnegative functions $f \in L^p(\R^d)$. Indeed, for a general $f$ we write 
 $f = f^+ - f^-$ and since
 $$ | \nabla e^{-tL} (f^+-f^-) |^2   \le 2 ( | \nabla e^{-tL} f^+ |^2 + | \nabla e^{-tL} f^- |^2)$$
and 
$$ | \sqrt{V} e^{-tL} (f^+-f^-) |^2   \le 2 ( | \sqrt{V} e^{-tL} f^+ |^2 + | \sqrt{V} e^{-tL} f^- |^2)$$ 
we see that it is enough to prove 
$$\| \H_L(f^+) \|_p + \| \H_L(f^-) \|_p \le C_p(\|  f^+\|_p + \| f^- \|_p),$$
which in turn will imply $\| \H_L(f) \|_p \le 2 C_p \|f\|_p$. 

Now we follow similar arguments as in \cite{Coulhon-Studia}. Fix  a non-trivial $0 \le f \in L^1(\R^d) \cap L^\infty(\R^d)$ and set 
$u(t,x) = e^{-tL}f(x)$. Note that the semigroup $(e^{-tL})_{t \ge0}$ is irreducible (see \cite{Ouh05}, Chapter 4) which means that 
for each $t > 0$, $ u(t,x) > 0$ (a.e.). Observe that 
$$(\frac{\partial}{\partial t} + L) u^p = (1-p) Vu^p - p(p-1) u^{p-2} | \nabla u |^2.$$
This implies
\begin{equation}\label{12}
p | \nabla u|^2 + V |u|^2 =  - \frac{u^{2-p}}{p-1} (\frac{\partial}{\partial t} + L) u^p.
\end{equation}
Hence, there exists a positive constant $c_p$ such that 
\begin{eqnarray*}
\left(\H_L(f)(x) \right)^2 &\le&  - c_p \int_0^\infty u(t,x)^{2-p} (\frac{\partial}{\partial t} + L) u(t,x)^p dt\\
&\le& c_p \sup_{t > 0} u(t,x)^{2-p} J(x)
\end{eqnarray*}
where 
$$J(x) = - \int_0^\infty (\frac{\partial}{\partial t} + L) u(t,x)^p dt.$$
The previous estimate uses the fact that $(\frac{\partial}{\partial t} + L) u(t,x)^p \le 0$ which follows from \eqref{12}. Since the semigroup 
$(e^{-tL})_{t\ge0}$ is sub-Markovian it follows that 
\begin{equation}\label{13}
\| \sup_{t > 0} e^{-tL} f(x) \|_p \le C \| f \|_p.
\end{equation}
The latter estimate is true for all $p \in (1, \infty)$, see \cite{Stein} (p. 73). Therefore, by H\"older's inequality
\begin{equation}\label{14}
\int_{\R^d} | \H_L(f)(x) |^p dx \le c_p \| f \|_p^{\frac{p}{2}(2-p)} \left(\int_{\R^d} J(x) dx \right)^{p/2}.
\end{equation}
On the other hand,
\begin{eqnarray*}
\int_{\R^d} J(x) dx &=& - \int_{\R^d} \int_0^\infty (\frac{\partial}{\partial t} + L) u(t,x)^p dt dx\\
&=& \| f \|_p^p - \int_0^\infty \int_{\R^d} L u(t,x)^p dx dt\\
&=& \| f \|_p^p - \int_0^\infty \int_{\R^d} V u(t,x)^p dx dt\\
&\le&  \| f \|_p^p.
\end{eqnarray*}
Inserting this in \eqref{14} gives
$$\int_{\R^d} | \H_L(f)(x) |^p dx \le c_p \| f \|_p^p$$
which proves the theorem since this estimates extends by density to all $ f \in L^p(\R^d)$. 
\end{proof}

\section{Boundedness on $L^p, \ p > 2$} \label{sec3}
We assume throughout this section that $d \ge 3$. We start with the following result.
\begin{prop}\label{prop31}
Let  $0 \le V \in L_{loc}^1(\R^d)$. If  $\G_L$ (or $\H_L$) is bounded on $L^p(\R^d)$ then there exists a constant $ C > 0$ such that 
\begin{equation}\label{31}
\| \nabla e^{-tL} f \|_p \le \frac{C}{\sqrt{t}} \| f \|_p 
\end{equation}
for all $ t > 0$ and all $f \in L^p(\R^d)$. 
\end{prop}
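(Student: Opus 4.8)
The plan is to deduce the gradient bound \eqref{31} from the boundedness of $\H_L$ (the case of $\G_L$ being similar, or reducible via a subordination-type argument), by exploiting the analyticity of the semigroup together with the fact that $\H_L$ controls an $L^2$-in-time average of $\nabla e^{-sL}f$. First I would fix $t > 0$ and $f \in L^p(\R^d)$, and write, for $s$ in a suitable range comparable to $t$,
$$ \nabla e^{-tL} f = \nabla e^{-(t-s)L} e^{-sL} f. $$
The idea is to average this identity over $s \in (t/2, t)$ (say) and to insert the analyticity estimate $\| L e^{-\tau L} g \|_p \le \frac{C}{\tau} \| g \|_p$, valid for all $\tau > 0$ and all $p \in (1,\infty)$ since $L$ generates a bounded analytic semigroup on $L^p$ (a consequence of the Gaussian bounds, as recalled in the introduction).

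The key steps, in order, would be: (1) For fixed $t$, write $\nabla e^{-tL} f(x) = \nabla e^{-sL}\big( e^{-(t-s)L} f \big)(x)$ and integrate the square in $s$ over $(t/2,t)$; by Minkowski's integral inequality in $L^{p/2}$,
$$ \Big\| \Big( \int_{t/2}^t | \nabla e^{-sL} e^{-(t-s)L} f |^2 \, ds \Big)^{1/2} \Big\|_p \le \Big\| \H_L\big( e^{-(t-s)L} f \big) \Big\|_p, $$
but since the inner argument depends on $s$ one instead keeps $s$ fixed inside and bounds pointwise $\int_{t/2}^t |\nabla e^{-sL} g_s|^2 ds \le (\H_L g)^2$ only when $g$ is $s$-independent; so the cleaner route is: (2) use the semigroup property to write $\nabla e^{-tL} f = \nabla e^{-(t/2)L}\big(e^{-(t/2)L}f\big)$ and then, for $r \in (t/2, t)$, $\nabla e^{-rL}\big(e^{-(t/2)L}f\big)$, average over $r$, getting
$$ \Big\| \Big( \tfrac{2}{t}\int_{t/2}^{t} |\nabla e^{-rL} h|^2 \, dr \Big)^{1/2} \Big\|_p \le \sqrt{\tfrac{2}{t}} \, \| \H_L h \|_p \le \frac{C}{\sqrt{t}} \| h \|_p, $$
where $h = e^{-(t/2)L} f$ and $\|h\|_p \le \|f\|_p$ by contractivity. (3) Finally, remove the time-average: since $r \mapsto \nabla e^{-rL} h$ does not vary too much on $(t/2,t)$ — quantitatively, $\partial_r \nabla e^{-rL} h = -\nabla L e^{-rL} h$ and one can compare $\nabla e^{-tL}h$ with the average using the fundamental theorem of calculus plus the analyticity bound $\|\nabla L e^{-rL} h\|_p \le \frac{C}{r^{3/2}}\|h\|_p$ — one obtains $\|\nabla e^{-tL} f\|_p \le \frac{C}{\sqrt{t}}\|f\|_p$. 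Alternatively, and more slickly, one observes that $t \mapsto \sqrt{t}\,\nabla e^{-tL}$ is ``slowly varying'' so that an $L^2$-average bound on a dyadic interval upgrades to a pointwise-in-$t$ bound.

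The main obstacle I anticipate is step (3): passing from the square-function (time-averaged) estimate to the pointwise-in-$t$ operator bound \eqref{31}. This requires some quantitative control such as $\|\nabla L e^{-rL}\|_{p\to p} \le C r^{-3/2}$, which itself needs the boundedness of $\sqrt{L}\,(L)^{1/2} e^{-rL}$ composed with the Riesz-type operator $\nabla L^{-1/2}$ — but note $\nabla L^{-1/2}$ need not be bounded on $L^p$ for $p > 2$ here, which is precisely the subtlety the paper is exploiting. So the correct argument must avoid Riesz transform boundedness: one should instead differentiate under the square function directly, writing $\nabla e^{-tL} h - \frac{2}{t}\int_{t/2}^t \nabla e^{-rL} h \, dr = \frac{2}{t}\int_{t/2}^t \int_r^t \nabla L e^{-\sigma L} h \, d\sigma\, dr$ and estimating $\|\nabla L e^{-\sigma L} h\|_p$ by writing $\nabla L e^{-\sigma L} h = \nabla e^{-(\sigma/2)L}\big(L e^{-(\sigma/2)L} h\big)$ and bounding the $L^p$ norm of the first factor applied to a function by the same square-function trick at scale $\sigma$. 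Iterating or bootstrapping this self-improving estimate is where the real work lies; a Gronwall-type or fixed-point closing argument on the quantity $\phi(t) := \sqrt{t}\,\|\nabla e^{-tL}\|_{p\to p}$ should finish the proof once one checks $\phi$ is locally bounded, which follows from the hypothesis on $\H_L$ together with the contractivity of the semigroup.
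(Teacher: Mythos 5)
There is a genuine gap, and it sits exactly where you flagged it: the passage from the time-averaged estimate to the fixed-time bound \eqref{31}. Your step (2) is fine (pointwise Cauchy--Schwarz in $r$ gives $\bigl(\tfrac2t\int_{t/2}^t|\nabla e^{-rL}h|^2dr\bigr)^{1/2}\le \sqrt{2/t}\,\H_L(h)$), but the removal of the average in step (3) needs $\|\nabla L e^{-\sigma L}h\|_p\lesssim \sigma^{-3/2}\|h\|_p$, i.e.\ the very estimate being proved, and the bootstrap on $\phi(t):=\sqrt t\,\|\nabla e^{-tL}\|_{p\to p}$ does not close as you set it up. Two concrete problems: (i) with averaging over the dyadic interval $(t/2,t)$ the self-improving inequality has the form $\phi(t)\le C+C_a\sup_{\sigma\in[t/4,t/2]}\phi(\sigma)$, where $C_a$ is the analyticity constant of the semigroup, which is not small; this is repairable by averaging over $((1-\delta)t,t)$ and choosing $\delta$ small so the coefficient becomes $O(\delta)$. (ii) Much more seriously, any Gronwall/fixed-point argument of this type requires knowing a priori that $\phi$ is finite (locally bounded), and your assertion that this ``follows from the hypothesis on $\H_L$ together with contractivity'' is unsupported and essentially circular: for $p>2$ and general $0\le V\in L^1_{loc}$, finiteness of $\|\nabla e^{-tL}\|_{p\to p}$ is not known a priori (indeed, by Theorem \ref{thm32} the scaled bound typically \emph{fails} for $p>d$ unless $V=0$), and the $\H_L$-hypothesis only controls an $L^2(dr)$-average of $\nabla e^{-rL}f$; upgrading that to the single time $r=t$ again requires control of $\partial_r\nabla e^{-rL}f=-\nabla Le^{-rL}f$, which is the unknown quantity. (Regularizing $V$ by $V_n=\min(V,n)$ gives finiteness of $\|\nabla e^{-tL_n}\|_{p\to p}$ via Duhamel, but then the square-function hypothesis is not available for $L_n$ with uniform constants.) A minor additional point: your reduction of the $\G_L$ case to the $\H_L$ case ``via subordination'' goes the wrong way, since subordination gives the pointwise domination $\G_L\le C\,\H_L$ as in \eqref{11}; the paper therefore assumes only $\G_L$ bounded and works with the Poisson semigroup.

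For comparison, the paper's proof avoids the a priori finiteness issue altogether. Writing $P_t=e^{-t\sqrt L}$, one shows for nice $f$ the pointwise inequality $|\nabla f|^2\le 2\int_0^\infty\bigl(|\nabla L^{1/2}P_tf|^2+\nabla LP_tf\cdot\nabla P_tf\bigr)\,t\,dt$, and bounds the right-hand side pointwise by $\G_L(L^{1/2}f)^2+\G_L(Lf)\,\G_L(f)$; taking $L^{p/2}$ norms yields the multiplicative inequality \eqref{32}, $\|\nabla f\|_p\le C\bigl[\|L^{1/2}f\|_p+\|Lf\|_p^{1/2}\|f\|_p^{1/2}\bigr]$, in which the unknown gradient appears only on the left and the right-hand side is finite by hypothesis. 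Applying \eqref{32} to $e^{-tL}f$ and invoking analyticity on $L^p$ ($\|L^{1/2}e^{-tL}f\|_p\lesssim t^{-1/2}\|f\|_p$, $\|Le^{-tL}f\|_p\lesssim t^{-1}\|f\|_p$) gives \eqref{31} directly, with no bootstrap. If you wish to salvage your route, you must either supply an honest a priori finiteness argument for $\phi$ or restructure the estimate, as the paper does, so that the quantity to be bounded never has to be assumed finite on the right-hand side.
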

\begin{proof}
Remember that by \eqref{11}, if $\H_L$ is bounded on $L^p(\R^d)$ then the same holds for $\G_L$. \\
Suppose that $\G_L$ is bounded on $L^p(\R^d)$. We prove that 
\begin{equation}\label{32}
\| \nabla f \|_p \le C \left[ \| L^{1/2} f \|_p + \| L f \|_p^{1/2} \| f \|_p^{1/2} \right].
\end{equation}
The inequality here holds for $f$ in the domain of $L$, seen as an operator on $L^p(\R^d)$.\footnote{Since the semigroup $e^{-tL}$ is sub-Markovian, it acts on $L^p(\R^d)$ and hence the generator of this semigroup in $L^p(\R^d)$ is well defined. This is the operator $L$ we consider on $L^p(\R^d)$.}
In order to do this we follow some arguments from \cite{CoulhonCPAM}. Set $P_t := e^{-t \sqrt{L}}$ and fix $f \in L^2(\R^d)$. 
By integration by parts,
$$\| \nabla P_t f \|_2^2 = (-\Delta P_tf, P_t f) \le (L P_t f, P_tf) = \| L^{1/2} P_tf \|_2^2.$$
In particular, 
$$\| \nabla P_t f \|_2 \le \frac{C}{t} \|f\|_2 \to 0 \ \text{as} \ t \to + \infty.$$
The same arguments show that $t \| \nabla L^{1/2} P_t f \|_2 \to 0$ as $t \to +\infty$. Therefore,
\begin{eqnarray*}
| \nabla f  |^2 &=& - \int_0^\infty \frac{d}{dt} | \nabla P_t f |^2 dt\\
&=& - \left[ t \frac{d}{dt} | \nabla P_t f |^2 \right]_0^\infty  + \int_0^\infty \frac{d^2}{dt^2} | \nabla P_t f |^2 t\, dt \\
&\le& \int_0^\infty \frac{d^2}{dt^2} | \nabla P_t f |^2 t\, dt \\
&=& 2 \int_0^\infty ( |\nabla L^{1/2} P_t f|^2 + \nabla L P_t f.\nabla P_tf) t\, dt\\
&=:& I_1 + I_2.
\end{eqnarray*}
Using the fact that $\G_L$ is bounded on $L^p(\R^d)$ it follows that 
\begin{equation}\label{33}
 \| I_1 \|_{p/2} \le \| \G_L(L^{1/2} f) \|_p^2  \le C  \| L^{1/2} f \|_p^2.
 \end{equation}
 By the Cauchy-Schwartz inequality, 
 \begin{eqnarray*}
 | I_2| &\le& \left( \int_0^\infty ( |\nabla L P_t f|^2 t dt \right)^{1/2} \left( \int_0^\infty ( |\nabla  P_t f|^2 t dt \right)^{1/2} \\
 &\le& \G_L (L f) \G_L(f).
 \end{eqnarray*}
 Integrating gives
 \begin{equation}\label{34}
 \| I_2 \|_{p/2}^{p/2} \le \left( \int_{\R^d} | \G_L(Lf) |^p \right)^{1/2} \left( \int_{\R^d} | \G_L(f) |^p \right)^{1/2} \le C \| L f \|_p^{p/2} \| f \|_p^{p/2}.
 \end{equation}
 Combining \eqref{33} and \eqref{34} gives \eqref{32} for $f \in D(L) \cap L^2(\R^d)$. In order to obtain \eqref{32} for all
 $f \in D(L)$ we take a sequence $f_n \in L^2(\R^d) \cap L^p(\R^d)$ which converges in the $L^p$-norm to $f$. We apply \eqref{32} to 
 $e^{-tL}f_n$ (for $t > 0$) and then  let $n \to + \infty$ and $t \to 0$. 
 
 For $f \in L^p(\R^d)$ we apply \eqref{32} to $e^{-tL}f$ and we note that
 $\| L^{1/2} e^{-tL} f \|_p \le \frac{C}{\sqrt{t}} \| f \|_p $ and $\| L e^{-tL} f \|_p \le \frac{C}{t} \| f \|_p $. Both assertions here follow from the analyticity of the semigroup on $L^p(\R^d)$ (see \cite{Ouh05}, Chap. 7). This proves the proposition.
\end{proof}

\noindent {\bf Remark.} In the proof we did not use the boundedness of the function $\G_L$ but only its gradient part, i.e. boundedness on $L^p(\R^d)$ of the Littlewood-Paley-Stein function:
\begin{equation}\label{G}
 \G(f)(x) = \left( \int_0^\infty t | \nabla e^{-t \sqrt{L}} f(x) |^2 dt \right)^{1/2}.
 \end{equation}

In the next result we shall need the assumption that there exists  $\varphi \in L^\infty(\R^d)$, $\varphi > 0$ such that 
\begin{equation}\label{35}
L \varphi = 0.
\end{equation}
The meaning of \eqref{35} is $e^{-tL} \varphi = \varphi$ for all $t \ge 0$.  \\
Note that \eqref{35} is satisfied for a wide class of potentials. This is the case for example if $V \in L^{d/2-\eps}(\R^d) \cap L^{d/2+\eps}(\R^d)$ for some $\eps > 0$, see \cite{McGillivray-Ouhabaz}. See also \cite{Mu} for more results in this direction. 
\begin{teo}\label{thm32}
Suppose that there exists  $0 < \varphi \in L^\infty(\R^d)$  which satisfies \eqref{35}. Then $\G_L$ (or $\H_L$) is bounded on $L^p(\R^d)$ for some $p > d$ {\it if and only if} $V = 0$
\end{teo}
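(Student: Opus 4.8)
The plan is to prove the two implications separately. The easy direction is ``$V=0$ implies $\G_L$ (and $\H_L$) bounded on $L^p$ for all $p\in(1,\infty)$'': this is the classical Littlewood--Paley--Stein theory for $-\Delta$, and when $V=0$ the extra $\sqrt V$-terms in the definitions of $\G_L,\H_L$ simply vanish, so there is nothing new to prove. The substance is the forward direction: assuming $\G_L$ (or, by \eqref{11}, $\H_L$) is bounded on $L^p(\R^d)$ for some $p>d$, deduce $V=0$. First I would invoke Proposition \ref{prop31} to upgrade the Littlewood--Paley--Stein bound to the gradient estimate \eqref{31}, namely $\|\nabla e^{-tL}f\|_p\le C t^{-1/2}\|f\|_p$ for all $t>0$. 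The idea, following \cite{Magniez}, is to test this inequality against the positive bounded harmonic function $\varphi$ from \eqref{35}: since $e^{-tL}\varphi=\varphi$, formally $\nabla e^{-tL}\varphi=\nabla\varphi$, so the left-hand side is $t$-independent while the right-hand side tends to $0$ as $t\to\infty$, forcing $\nabla\varphi=0$; but $\varphi$ is not in $L^p$, so this has to be localized.

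The localization step is where the real work lies. I would fix a smooth cutoff $\chi$ supported in a ball $B(0,2R)$, equal to $1$ on $B(0,R)$, and apply \eqref{31} to $f=\chi\varphi\in L^p$ (it is bounded with compact support, hence in every $L^p$). On $B(0,R)$ one wants to compare $\nabla e^{-tL}(\chi\varphi)$ with $\nabla e^{-tL}\varphi=\nabla\varphi$. Write $e^{-tL}(\chi\varphi)-\varphi = e^{-tL}(\chi\varphi-\varphi) = -e^{-tL}((1-\chi)\varphi)$, and estimate the contribution of $(1-\chi)\varphi$, which is supported outside $B(0,R)$, on the inner ball using the Gaussian upper bound \eqref{01} for the kernel of $e^{-tL}$ (dominated by the Gaussian kernel of $e^{t\Delta}$): off-diagonal Gaussian decay makes $\|\nabla e^{-tL}((1-\chi)\varphi)\|_{L^p(B(0,R))}$ (or rather its contribution after a further localization to pass the gradient through) controlled by something like $t^{-1/2}e^{-cR^2/t}\|\varphi\|_\infty R^{d/p}$ for $t\lesssim R^2$, while for $t\gtrsim R^2$ one uses \eqref{31} directly on $f=\chi\varphi$ together with $\|\chi\varphi\|_p\lesssim\|\varphi\|_\infty R^{d/p}$. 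The key arithmetic point is that $p>d$ makes $R^{d/p}/\sqrt t$ at the scale $t\sim R^2$ behave like $R^{d/p-1}\to 0$ as $R\to\infty$. Choosing $t=t_R\to\infty$ slowly enough (e.g. $t_R\sim R^2$), one concludes $\|\nabla\varphi\|_{L^p(B(0,R))}=\lim \|\nabla e^{-tL}(\chi\varphi)\|_{L^p(B(0,R))}$ is bounded by a quantity tending to $0$, hence $\nabla\varphi\equiv0$ on $\R^d$, so $\varphi$ is a positive constant.

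Once $\varphi\equiv c>0$ is constant, $L\varphi=0$ reads $-\Delta\varphi+V\varphi = cV = 0$ in the distributional sense, which gives $V=0$ since $c>0$. Actually a cleaner way to extract $V=0$ from $\nabla\varphi=0$: from \eqref{35} and the form $\fra$, testing $\fra(\varphi,\psi)=0$ against $\psi=\chi^2\varphi$ and using $\nabla\varphi=0$ yields $\int V\varphi^2\chi^2 = 0$, and letting the cutoff grow gives $\int V\varphi^2 = 0$, hence $V=0$ a.e. since $\varphi>0$. I expect the main obstacle to be making the off-diagonal/cutoff commutator estimates rigorous: one must control $\nabla e^{-tL}$ applied to a function supported far away, which requires either Gaussian bounds on the kernel of $\nabla e^{-tL}$ (not just on $e^{-tL}$ itself) or, to avoid that, an argument that first multiplies by a second cutoff and absorbs the commutator $[\nabla,\chi']$ terms using only \eqref{01} and the $L^p\to L^p$ gradient bound \eqref{31} applied at comparable scales. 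The Remark after Proposition \ref{prop31} is relevant here: only the gradient part $\G$ of $\G_L$ is used, so the whole argument also proves the stronger statement with $\G$ in place of $\G_L$.
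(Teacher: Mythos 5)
Your easy direction, the reduction via Proposition \ref{prop31}, and the final step (once $\varphi$ is constant, $L\varphi=0$ forces $V=0$) all match the paper. The gap is in the middle step, where you try to show $\nabla\varphi=0$ by localizing $\varphi$. To compare $\nabla e^{-tL}(\chi\varphi)$ with $\nabla\varphi$ on an inner ball you must control $\nabla e^{-tL}$ applied to $(1-\chi)\varphi$, a bounded function supported far away; this needs either pointwise Gaussian bounds on $\nabla_x k_t(x,y)$ or $L^p$ off-diagonal estimates for $\nabla e^{-tL}$, and neither is available from the hypotheses. All you have is the global bound \eqref{31} and the Gaussian bound \eqref{36} for $k_t$ itself, and composing $\nabla e^{-tL}=(\nabla e^{-(t/2)L})e^{-(t/2)L}$ does not localize, because $\nabla e^{-(t/2)L}$ carries no off-diagonal decay of its own (for Schr\"odinger operators, Gaussian bounds for $\nabla_x k_t$ are a genuinely stronger property, tied to $p>2$ Riesz-transform behaviour, and can fail). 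Moreover $(1-\chi)\varphi\notin L^p$, so \eqref{31} cannot be applied to it directly, and if you decompose over annuli $\{2^jR\le |x|<2^{j+1}R\}$ the only estimate at your disposal, $\|\nabla e^{-tL}(\eta_j\varphi)\|_p\lesssim t^{-1/2}(2^jR)^{d/p}\|\varphi\|_\infty$, diverges when summed in $j$. The ``second cutoff / commutator absorption'' you mention as a fallback is precisely the unresolved point, not a routine step; you acknowledge the obstacle but do not overcome it, so the forward implication is not proved as written.

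The paper's proof avoids localization altogether, and this is the idea you are missing: apply \eqref{31} not to cutoffs of $\varphi$ but to $k_{t/2}(\cdot,y)$, which lies in $L^p$ by \eqref{36} with $\|k_{t/2}(\cdot,y)\|_p\lesssim t^{-\frac d2(1-\frac1p)}$; since $k_t(x,y)=e^{-\frac t2L}k_{t/2}(\cdot,y)(x)$, this gives $\|\nabla k_t(\cdot,y)\|_p\lesssim t^{-\frac12-\frac d2(1-\frac1p)}$ uniformly in $y$. Because $p>d$, the Morrey--Sobolev inequality $|f(x)-f(x')|\le C|x-x'|^{1-\frac dp}\|\nabla f\|_p$ converts this into a H\"older estimate for $k_t(\cdot,y)$; taking the geometric mean of that estimate with \eqref{36} restores integrability in $y$, and integrating against the bounded function $\varphi$ yields
$|\varphi(x)-\varphi(x')|\le C|x-x'|^{\frac12-\frac d{2p}}\,t^{\frac d{4p}-\frac14}$,
whose right-hand side tends to $0$ as $t\to\infty$ precisely because $p>d$. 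Hence $\varphi$ is constant and $V=0$ follows as you indicated. To salvage your scheme you would first have to establish the off-diagonal gradient estimates it presupposes, which is extra (and in general unavailable) input, whereas the kernel-regularity route uses only \eqref{31}, \eqref{36} and $p>d$.
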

\begin{proof} If $V = 0$ then  $L = -\Delta$ and it is known that the Littlewood-Paley-Stein function $\G_L$ (and also $\H_L$) is bounded on $L^p(\R^d)$ for all $p \in (1, \infty)$.\\
Suppose now that $V$ is as in the theorem and $\G_L$ is bounded on $L^p(\R^d)$ for some $p > d$.\\
Let $k_t(x,y)$ be the heat kernel of $L$, i.e.,
$$e^{-tL}f(x) = \int_{\R^d} k_t(x,y) f(y) dy$$
for all $f \in L^2(\R^d)$. As mentioned  in the introduction, due to the positivity of $V$, 
\begin{equation}\label{36}
 k_t(x,y) \le \frac{1}{(4 \pi t)^{d/2}} e^{- \frac{|x-y|^2}{4t}}.
 \end{equation}
On the other hand,  using  the Sobolev inequality (for $p > d$)
 $$ |f(x) - f(x') | \le C |x-x'|^{1- \frac{d}{p}} \| \nabla f \|_p$$
  we have
 $$| k_t(x,y) - k_t(x',y) | \le C |x-x'|^{1- \frac{d}{p}} \| \nabla k_t(.,y)\|_p.$$
 Using \eqref{36}, Proposition \ref{prop31} and the fact that 
 $$ k_t(x,y) = e^{- \frac{t}{2}L}k_{\frac{t}{2}}(., y) (x), $$
 we have
 \begin{equation}\label{37}
 | k_t(x,y) - k_t(x',y) |  \le C |x-x'|^{1- \frac{d}{p}}  t^{-\frac{1}{2}} t^{-\frac{d}{2}(1- \frac{1}{p})}.
 \end{equation}
 Thus, using again \eqref{36} we obtain 
 \begin{eqnarray*}
 | k_t(x,y) - k_t(x',y) | &=& | k_t(x,y) - k_t(x',y) |^{1/2} | k_t(x,y) - k_t(x',y) |^{1/2}\\
 &\le& C | x-x'|^{\frac{1}{2}-\frac{d}{2p}}  t^{-\frac{d}{2} +  \frac{d}{4p}- \frac{1}{4}} \left( e^{- \frac{ |x-y|^2 }{8t}} + e^{- \frac{|x'-y|^2}{8t}} \right).
 \end{eqnarray*}
Hence, for $x, x' \in \R^d$
 \begin{eqnarray*}
 | \varphi(x) - \varphi(x') | &= & | e^{-tL} \varphi (x) - e^{-tL}\varphi(x') |\\
 &=& | \int_{\R^d} [ k_t(x,y) - k_t(x',y)] \varphi(y) dy\\
 &\le& \| \varphi \|_\infty \int_{\R^d} | k_t(x,y) - k_t(x',y) | dy\\
 &\le& C | x-x'|^{\frac{1}{2}-\frac{d}{2p}}  t^{\frac{d}{4p}- \frac{1}{4}}.
 \end{eqnarray*}
Letting $t \to \infty$, the RHS converges to $0$ since $p > d$. This implies that $\varphi = c > 0$ is constant. The equality
$0 = L \varphi = L c = Vc$ and hence $V = 0$. 
\end{proof}

\noindent{\bf Remark.} 1. The above proof is inspired from \cite{Magniez} in which it is proved that the boundedness of the Riesz transform $\nabla L^{-1/2}$ on $L^p(\R^d)$  for some $p > d$ implies that $V = 0$. \\
2. According to a previous remark, we could replace in the last theorem the boundedness of $\G_L$ by the boundedness of $\G$ defined by
\eqref{G}.

\end{document}